\documentclass[11pt]{article}
\usepackage{epsfig,amsmath,amsthm,latexsym,graphicx,amsfonts,amssymb}
\usepackage[numbers]{natbib}
\addtolength{\textwidth}{30mm} \addtolength{\oddsidemargin}{-15mm}
\addtolength{\textheight}{40mm} \addtolength{\topmargin}{-15mm}
\setlength{\parskip}{5mm} \setlength{\parindent}{0mm} \sloppy
\newif\ifiota
\iotatrue
\newif\ifZ
\Ztrue
\newcommand{\beq}{\begin{equation}}
\newcommand{\eeq}{\end{equation}}
\newcommand{\R}{\mathbb R}

\newcommand{\cX}{\mathcal{X}}
\newcommand{\cY}{\mathcal{Y}}
\newcommand{\cZ}{\mathcal{Z}}
\newcommand{\cM}{\mathcal{M}}
\newcommand{\cN}{\mathcal{N}}

\newcommand{\cT}{\mathcal{T}}

\newcommand{\gk}{\hat{g}}
\newcommand{\hk}{\hat{h}}
\newcommand{\gkn}{\gk_n}
\newcommand{\gkone}{\gk_1}
\newcommand{\hkn}{\hk_n}
\newcommand{\hkone}{\hk_1}

\newcommand{\meansuff}{\tau_\theta}

\newcommand{\cartMn}{\prod^n \cM}
\newtheorem{theorem}{Theorem}

\newtheorem{remark}{Remark}
\newtheorem*{assumption}{Assumptions}

\begin{document}

\title{Chentsov's theorem for exponential families}
\author{James G. Dowty}
\date{\today}

\maketitle

\abstract{
Chentsov's theorem 
characterizes the Fisher information metric on statistical models as essentially the only Riemannian metric that is invariant under sufficient statistics.  This implies that each statistical model is naturally equipped with a geometry, so Chentsov's theorem explains
why many statistical properties can be described in geometric terms.
However, despite being one of the foundational theorems of statistics, Chentsov's theorem has only been proved previously in very restricted settings or under relatively strong regularity and invariance assumptions.  We therefore prove a version of this theorem for the 
important case of exponential families.
In particular, we characterise the Fisher information metric as the only Riemannian metric (up to rescaling) on an exponential family and its derived families that is invariant under independent and identically distributed extensions and canonical sufficient statistics.
Our approach is based on the central limit theorem, so it gives a unified proof for both discrete and continuous exponential families, and it is less technical than previous approaches.
}

\section{Introduction}
\label{S:intro}

Chentsov's theorem is a foundational theorem in statistics that characterizes the Fisher information metric on statistical models as the only Riemannian metric (up to rescaling) that is invariant under certain, statistically important transformations \citep{Chentsov78,Chentsov82,Campbell86,AyEtAl15,BauerEtAl16}.
This effectively means that the Fisher information metric is the only natural metric on a statistical model, so
many statistical properties of these models should be
describable in terms of this metric.
Known examples of this correspondence between statistical and geometric properties include:  the Cram\'er-Rao lower bound for the variance of an unbiased estimator
in terms of the inverse of the Fisher information metric
\citep[Thm. 2.2]{AmariNagaoka00};
orthogonality as a criterion for first-order efficiency of estimators \citep[Thm. 4.3]{AmariNagaoka00}; the central role of statistical curvature in the information loss of an efficient estimator \citep[\S 3.3]{KassVos97} and in second-order efficiency \citep[\S 3.4]{KassVos97}; and the spontaneous emergence of the Fisher information volume \citep{Rissanen96} in the minimum description length (MDL) approach to statistical model selection \citep{BarronEtAl98}.

The original version of Chentsov's theorem \citep{Chentsov78,Chentsov82,Campbell86} only applied in the restricted setting of statistical models with finite data spaces.
This version of the theorem says that
the Fisher information metric is the only metric (up to a multiplicative constant) that is defined on all models with finite data spaces and is invariant under all sufficient statistics.
Recall that a statistical model $\cM$ is a (sufficiently regular) set of probability measures on the same measurable space $\cX$, which we call the data space of $\cM$, and that a sufficient statistic for $\cM$ is a function on $\cX$ for which the conditional distribution of any measure $P$ in $\cM$, given the sufficient statistic, is the same for all $P$.  Sufficient statistics induce corresponding maps on statistical models (the measure-theoretic push-forward maps) and
the invariance assumption above is that all of these maps are isometries (i.e., distance-preserving maps).

Since the assumption of finite data spaces is very restrictive, \citet{AyEtAl15} proved a version of Chentsov's theorem that applies to models whose data spaces $\cX$ are smooth manifolds.  Their version says that the Fisher information metric is the only metric (up to rescaling) that is defined on all statistical models with a given data space $\cX$ and is invariant under all sufficient statistics, including discontinuous ones.  This version of Chentsov's theorem applies to many interesting statistical models but it makes very strong assumptions about
both the breadth of the models on which the metrics are defined and
the invariance properties of these metrics.
Therefore \citet{BauerEtAl16} proved a version of Chentsov's theorem which says the Fisher information metric is the only metric (up to rescaling) that firstly is defined on the space of all smooth, positive densities on a compact manifold $\cX$ of dimension $2$ or higher and secondly is invariant under all diffeomorphisms from $\cX$ to itself (where diffeomorphisms are smooth maps with smooth inverses, so they are a special type of sufficient statistic).  The proof of \citet{BauerEtAl16} was based on results from the theory of generalized functions, especially the Schwartz kernel theorem \citep[\S 6.1]{FriedlanderJoshi98}, and it made far weaker invariance assumptions than that of \citet{AyEtAl15}.
The assumption that $\cX$ is a compact manifold without boundary excludes many cases of interest to statisticians, though \citet{BauerEtAl16} say this assumption can be weakened.

Despite their beauty and generality, the results of \citet{AyEtAl15} and \citet{BauerEtAl16} leave open the possibility that there might exist a natural metric other than the Fisher information metric on an individual statistical model $\cM$. This could occur, for example, if there is a natural metric on $\cM$ that does not (invariantly) extend to a metric on the infinite-dimensional models of \citep{AyEtAl15} and \citep{BauerEtAl16} that contain $\cM$ and many unrelated models.
Also, exponential families have a distinguished, finite-dimensional set of sufficient statistics, called the canonical sufficient statistics, which are related to their natural affine structures (\citep[Thm. 2.4]{AmariNagaoka00} and \citep[Lemma 8.1]{BarndorffNielsen78}).
Therefore, the invariance assumptions of \citep{AyEtAl15} and \citep{BauerEtAl16}
are arguably too strong for exponential families, and instead it would be more natural to consider invariance under canonical sufficient statistics rather than all sufficient statistics.

In this paper, we prove a refined version of Chentsov's theorem in the important case of exponential families.  Instead of considering metrics defined on an infinite-dimensional statistical model, as in \citep{AyEtAl15} and \citep{BauerEtAl16}, we consider metrics defined only on
a given exponential family $\cM$ and some of its derived families, namely its independent and identically distributed (IID) extensions and their corresponding natural exponential families.  Instead of assuming these metrics are invariant under all sufficient statistics or all diffeomorphisms, we assume invariance under canonical sufficient statistics and IID extensions.  This assumption of invariance under IID extensions has no analogue in previous work, but
IID extensions are natural and important transformations between statistical models (perhaps more so than sufficient statistics), so this invariance assumption is arguably more natural than invariance under sufficient statistics.  Also, this extra invariance assumption is offset by the fact that we restrict our sufficient statistics to the canonical ones.
Then, under a mild regularity condition,
we prove that metrics with these invariance properties are multiples of the Fisher information metric (see Theorem \ref{T:main} in Section \ref{S:chentsov}).  This result therefore gives a new characterisation of the Fisher information metric as the only metric on an exponential family and its derived families that is invariant under canonical sufficient statistics and IID extensions.

Our approach has a number of advantages:  as discussed above, we only assume that the metric is defined on an individual model and its related models, and our invariance assumptions respect the natural affine structures of exponential families; we only consider metrics on a collection of finite-dimensional models (similar to the original version of Chentsov's theorem \citep{Chentsov78,Chentsov82,Campbell86}), which allows us to avoid the technicalities encountered in \citep{AyEtAl15} and \citep{BauerEtAl16} because of the infinite-dimensionality of their statistical models; our proof is unified for discrete and continuous distributions, unlike the proofs of \citep{Chentsov78,Chentsov82,Campbell86} and \citep{BauerEtAl16}, so there is some hope of extending our proof to general statistical models; our proof shows that Chentsov's theorem is a corollary of the central limit theorem, which makes this result more understandable and intuitive; and our results complement those of \citep{BauerEtAl16}, since (curved) exponential families are essentially the only statistical models with smooth sufficient statistics that are not diffeomorphisms,
by the  Pitman--Koopman--Darmois theorem \citep{BarndorffNielsenPedersen68}.

The rest of this paper is set out as follows.  In Section \ref{S:FImetric} we define the Fisher information metric and some relevant notions from differential geometry, as they apply in our main case of interest.  In Section \ref{S:expfam} we briefly recall the definition of an exponential family and some of its derived families.  We then give precise descriptions of our assumptions in Section \ref{S:assumptions}, before using these assumptions and the central limit theorem to prove our characterisation of the Fisher information metric in Section \ref{S:chentsov}.  We then describe an extension of our proof to higher-order symmetric tensors in Section \ref{S:higherorder}, before finishing with a discussion of our results in Section \ref{S:conclusion}.  Section \ref{S:conclusion} also begins with a non-technical summary of our proof.

\section{The Fisher information metric}
\label{S:FImetric}

This section briefly recalls the definitions of tangent vectors and the Fisher information metric of a statistical model.  A general reference for the notions from Riemannian geometry described here is \cite[Appendix C]{KassVos97}.

In all later sections of this paper, we will take $\cM$ to be a regular exponential family with natural parameter space $\Theta$, but in this section we let $\cM$ be a more general statistical model and let $\Theta$ be any smooth parameter space for $\cM$.  More precisely, suppose $\Theta$ is an open subset of $\R^d$ and that $\mu$ is a measure on $\R^m$ with support $\cX$.  Then our statistical model is $\cM = \{ p_\theta \mu \mid \theta \in \Theta \}$, where each $p_\theta: \cX \to \R_{>0}$ is a $\mu$-integrable, strictly positive function that is normalized, meaning $1 = \int p_\theta d\mu$.  Note that $\cM$ is a set of probability measures on $\R^m$.
We assume that the parameterisation of $\cM$ by $\Theta$ is smooth, in the sense that $\theta \mapsto p_\theta(x)$ is a smooth (i.e., infinitely differentiable) function for $\mu$-almost all $x$.
We also assume that the parameterisation is non-singular, meaning that the parameterisation map $\Theta \to \cM$ given by $\theta \mapsto p_\theta \mu$ is injective and that it maps non-zero tangent vectors to non-zero tangent vectors, in a sense which will become clear below.

Because $\Theta$ is an open subset of $\R^d$, any tangent vector $u$ to $\Theta$ is a pair $u=(\theta,a)$ for some $\theta \in \Theta$ and some $a \in \R^d$, where $\theta$ is called the base-point of $u$.  The set of all such tangent vectors, which is denoted $T\Theta$ and is called the tangent bundle of $\Theta$, is therefore $T\Theta = \Theta \times \R^d$.
The tangent bundle is not a vector space in general, but the set of all tangent vectors with the same base-point is.  The vector space $T_\theta \Theta$ consisting of all vectors with base-point $\theta$ is called the tangent space to $\Theta$ at $\theta$.
Addition and scalar multiplication in this vector space are given by
\begin{align}
\label{E:vecsp}
su + tv = (\theta,sa+tb)
\end{align}
for any $u, v \in T_\theta \Theta$ and any $s, t \in \R$, where $u=(\theta,a)$ and $v=(\theta,b)$.  Note that addition and scalar multiplication in $T_\theta \Theta$ effectively ignore the shared base-point $\theta$.

Similarly, we can view each tangent vector to the statistical model $\cM$ as a pair $(P,A)$, where the base-point $P$ is an element of the model $\cM$ and $A$ is essentially the score in a particular direction \cite[\S3.3]{PistoneSempri95}.  More precisely, for each tangent vector $u=(\theta,a)$ to $\Theta$, there is a corresponding tangent vector $\tilde{u} = (P,A)$ to $\cM$ given by
\begin{align}
\label{E:tvec_param}
P = p_\theta \mu \text{ and } A = \sum_{i=1}^d a_i \frac{\partial p_\theta}{\partial \theta_i} \mu.
\end{align}
(The function taking $u$ to $\tilde{u}$ is the differential of the parameterisation $\theta \mapsto p_\theta \mu$ \cite[Def. C.3.4]{KassVos97}.)  Let the tangent bundle $T\cM$ of $\cM$ be the set of all such tangent vectors, i.e., let $T\cM = \{ \tilde{u} \mid u \in T\Theta\}$.
Also, let the tangent space $T_P \cM$ to $\cM$ at $P \in \cM$ be the vector space consisting of all tangent vectors $(P,A) \in T \cM$ with base-point $P$.  Even though we have used a particular parameterisation of $\cM$ to define $T_P \cM$,  this tangent space is natural, in the sense that $T_P\cM$ is the same for all smooth parameterisations.

The Fisher information metric $g^F$ on $\cM$ is given by
\begin{align}
\label{E:FI_defn}
g^F(\tilde{u},\tilde{v}) = \int \frac{dA}{dP} \frac{dB}{dP} \; dP
\end{align}
for any tangent vectors $\tilde{u} = (P,A)$ and $\tilde{v} = (P,B)$ in the tangent space $T_P \cM$ \cite[\S3]{BauerEtAl16}, where $dA/dP$ and $dB/dP$ are Radon-Nikodym derivatives \citep[\S 3.2]{Bogachev07}.
It is straightforward (see Appendix \ref{S:E:FI_abs_conv}) to show that definition (\ref{E:FI_defn}) for the Fisher information metric reduces to the usual, parameterisation-dependent definition \citep[eq. 2.6]{AmariNagaoka00}.  However, the formulation (\ref{E:FI_defn}) will be more useful to us than the usual definition.  Also, because (\ref{E:FI_defn}) is phrased only in terms of natural constructions, this formula makes it clear that $g^F$ does not depend on arbitrary choices, such as the choice of parameterisation.

A Riemannian metric on a set is just a function that puts an inner product on each of the set's tangent spaces (if the set is suitably regular and the inner products vary smoothly with the base-point).  For example, a Riemannian metric on $\Theta$ can be thought of as a smooth, matrix-valued function on $\Theta$ whose value at $\theta \in \Theta$ is a $d \times d$, symmetric, positive definite matrix $\bar{g}_\theta$, since this defines an inner product on each $T_\theta \Theta$ with the inner product of any $u, v \in T_\theta \Theta$ being $g(u,v) = a^T \bar{g}_\theta b$, where $u=(\theta,a)$ and $v=(\theta,b)$.

In our main case of interest, where $\cM$ is an exponential family, the integral in (\ref{E:FI_defn}) always converges \citep[Thm. 2.2.5]{KassVos97}.  Then it is not hard to see that (\ref{E:FI_defn}) defines an inner product on each tangent space to $\cM$ (and this varies smoothly with the base-point), so the Fisher information metric $g^F$ is a Riemannian metric on $\cM$.

\section{Exponential families and their derived families}
\label{S:expfam}

Partly to establish our notation, this section briefly recalls the definitions of an exponential family, its IID extensions and their corresponding natural exponential families.

\subsection{Exponential families}

Let $\mu$ be a measure on $\R^m$ and let $T:\cX \to \R^d$ be a measurable function, where $\cX \subseteq \R^m$ is the support of $\mu$.  Let
$$\Theta = \left\{ \theta \in \R^d \; \left| \; \int  \exp(\theta \cdot T) d\mu < \infty \right. \right\},$$
where the dot ($\cdot$) denotes the Euclidean inner product on $\R^d$.  For each $\theta \in \Theta$, define $p_\theta:\cX \to \R_{>0}$ by
\begin{align}
\label{E:ptheta}
p_\theta(x) = \exp(\theta \cdot T(x))/Z(\theta)
\end{align}
for any $x \in \cX$, where $Z: \Theta \to \R$ is the partition function
$ Z(\theta) = \int  \exp(\theta \cdot T) d\mu$.  Assume that $\Theta$ is a non-empty, open subset of $\R^d$ and that $T$ is full rank, in the sense that the image of $T$ is not contained in any $(d-1)$-dimensional hyperplane in $\R^d$.  Then $\cM = \{ p_\theta \mu \mid \theta \in \Theta \}$ is a regular exponential family of order $d$ with dominating measure $\mu$ and canonical sufficient statistic $T$, and all regular exponential families are of this form \citep[\S8.1]{BarndorffNielsen78}.  Note that each element of $\cM$ is a probability measure on $\R^m$.

\subsection{IID extensions}
\label{S:IIDextensions}

The $n$-fold IID extension $\cM^n$ of $\cM$ is the set $\cM^n = \{ P^n \mid P \in \cM \}$ of all measures of the form $P^n$ for some $P \in \cM$, where $P^n = P \times \dots \times P$ (with $n$ copies of $P$) is the product measure on $\cX^n$ \citep[\S 3.3]{Bogachev07}.  In terms of the parameterisation (\ref{E:ptheta}), $\cM^n$ is the set of all measures of the form $p_\theta^{(n)} \mu^n$ for some $\theta \in \Theta$, where $p_\theta^{(n)}: \cX^n \to \R_{>0}$ is given by $p_\theta^{(n)}(x_1, \dots, x_n) = p_\theta(x_1) \dots p_\theta(x_n)$ and $\mu^n = \mu \times \dots \times \mu$ is the product measure on $\cX^n$ \citep[Example 8.12(ii)]{BarndorffNielsen78}.  So by (\ref{E:ptheta}),
\begin{align}
\label{E:pthetan}
\ifZ p_\theta^{(n)} = \exp(n\theta \cdot T_n - n \log Z(\theta)),
\else p_\theta^{(n)} = \exp(n\theta \cdot T_n - n \psi(\theta)),
\fi
\end{align}
where $T_n:\cX^n \to \R^d$ is given by $T_n(x_1, \dots, x_n) = (T(x_1) + \dots + T(x_n))/n$ for any $x_1, \dots, x_n \in \cX$.  Therefore $\cM^n$ is an exponential family with dominating measure $\mu^n$ and sufficient statistic $T_n$
(and natural parameter $n \theta$, see \cite[Thm. 2.2.6]{KassVos97}).
Note that $\cM^1 = \cM$, $T_1 = T$ and $p_\theta^{(1)} = p_\theta$.

\subsection{Natural exponential families}
\label{S:natexpfams}

Recall that if $\cY$ and $\cZ$ are measurable spaces, $\phi:\cY \to \cZ$ is a measurable function and $P$ is a measure on $\cY$ then the push-forward of $P$ via $\phi$ is the measure $\phi_*P$ on $\cZ$ given by
\begin{align}
\label{E:pushfwd}
(\phi_*P)(U) = P(\phi^{-1}(U))
\end{align}
for any measurable set $U$ in $\cZ$ \citep[\S 3.6]{Bogachev07}.
This immediately implies that if $Y$ is a $\cY$-valued random variable with distribution $P$ then $\phi(Y)$ is a $\cZ$-valued random variable with distribution $\phi_*P$, which in symbols we write as
\begin{align}
\label{E:pushfwd_rv}
Y \sim P \text{ implies } \phi(Y) \sim \phi_*P.
\end{align}

Then the natural exponential family corresponding to $\cM^n$ and $T_n$ is the set
$\cN_n = \{ {T_n}_*P^n \mid P^n \in \cM^n \}$
of measures on $\R^d$.
By \citep[Examples 8.12(ii) and 8.12(iii)]{BarndorffNielsen78}, $\cN_n = \{ q_\theta^n \nu_n \mid \theta \in \Theta\}$, where $\nu_n$ is a measure on $\R^d$ which does not depend on $\theta$ and $q_\theta^n: \R^d \to \R_{>0}$ is given by
\begin{align}
\label{E:qthetan}
\ifZ q_\theta^n(y) = \exp(n\theta \cdot y - n \log Z(\theta))
\else q_\theta^n(y) = \exp(n\theta \cdot y - n \psi(\theta))
\fi
\end{align}
for any $y \in \R^d$.
The formula (\ref{E:qthetan}) shows that the superscript in $q_\theta^n$ is actually an exponent, so we will write $q_\theta$ for $q_\theta^1$ (and then the notation $q_\theta^n$ is unambiguous).

Note that even though $\cM, \cM^2, \cM^3, \dots$ and $\cN_1, \cN_2, \cN_3, \dots$ are families of measures on different spaces (namely, $\cX, \cX^2, \cX^3, \dots$ and $\R^d, \R^d, \R^d, \dots$, respectively), they are all parameterised by $\Theta \subseteq \R^d$ so they are all $d$-dimensional families of measures.

\section{Invariance and regularity conditions}
\label{S:assumptions}

Let $\cM$, $\cM^n$ and $\cN_n$ be as in Section \ref{S:expfam} and suppose now that these spaces have been equipped with Riemannian metrics $g$, $g^n$ and $g_n$, respectively.  In this section, we will give precise conditions that formalize the notion of these metrics being invariant under IID extensions and canonical sufficient statistics, as well as giving a mild regularity condition.  These conditions will then be used in Section \ref{S:chentsov} to prove our main theorem.  See Section \ref{S:Assum:Remarks} for a number of remarks about these assumptions.

\begin{assumption}
We make the following assumptions, which are described precisely in the subsections below:
\begin{enumerate}
  \item[A1]  The metrics $g$ and $g^n$ are invariant under IID extensions (up to a factor of $n$)
  \item[A2]  The metrics $g^n$ and $g_n$ are invariant under canonical sufficient statistics
  \item[A3]  The norms corresponding to the metrics $g_n$ can all be calculated by a function that satisfies a weak continuity condition
\end{enumerate}
\end{assumption}

\subsection{A1: Invariance under IID extensions}
\label{S:Assum:IID}

Let $IID_n: \cM \to \cM^n$ be the function which maps each $P \in \cM$ to the product measure $P^n = P \times \dots \times P$ (see Section \ref{S:IIDextensions}). 
Then our first assumption is that this map is an isometry (i.e., distance-preserving map) up to a factor of $n$.

More precisely, let $u = (\theta, a) \in T\Theta$ be any tangent vector to $\Theta$, as in Section \ref{S:FImetric}.
Then similarly to (\ref{E:tvec_param}), $u$ corresponds under the smooth parameterisation (\ref{E:pthetan})
to a tangent vector $\tilde{u}^n$ to $\cM^n$, where $\tilde{u}^n = (P^n,A^{(n)})$, $P^n = p_\theta^{(n)} \mu^n$ and $A^{(n)} = \sum_{i=1}^d a_i (\partial p_\theta^{(n)}/\partial \theta_i) \mu^n$.  Let $T\cM^n = \{ \tilde{u}^n \mid u \in T\Theta\}$ be the set of all such tangent vectors to $\cM^n$.  Then our first assumption is that
\begin{align}
\label{E:gn_IID}
g^n(\tilde{u}^n, \tilde{v}^n) = n g(\tilde{u}, \tilde{v})
\end{align}
for all tangent vectors $u, v \in T \Theta$ with the same base-point.  Here, $\tilde{v}$ and $\tilde{v}^n$ are the tangent vectors to $\cM$ and $\cM^n$ (respectively) corresponding to $v \in T\Theta$, as for $u$ above.
Note that (\ref{E:gn_IID}) just says that $g^n = n g$ under the identification of $\cM$ with $\cM^n$ via $IID_n$.

The Fisher information metric is invariant under IID extensions in the sense of (\ref{E:gn_IID}) by \citep[eq. 4.2]{AmariNagaoka00}, so assumptions (A1)--(A3) cannot characterize the Fisher information metric unless the factor of $n$ is included in (\ref{E:gn_IID}) (though see Remark \ref{R:factorn}).

\subsection{A2: Invariance under canonical sufficient statistics}
\label{S:Assum:SS}

Let $T_n:\cX^n \to \R^d$ be the canonical sufficient statistic from Section \ref{S:IIDextensions} and let ${T_n}_*: \cM^n \to \cN_n$ be the corresponding (measure-theoretic) push-forward map of $T_n$, see Section \ref{S:natexpfams}.  Then our second assumption is that this map ${T_n}_*$ is an isometry (and that all other canonical sufficient statistics are isometries, in a sense which will be made precise in Section \ref{S:Assum:H}).

More precisely, let $u = (\theta, a) \in T\Theta$ be any tangent vector to $\Theta$, as in Section \ref{S:FImetric}.
Then similarly to (\ref{E:tvec_param}), $u$ corresponds under the smooth parameterisation (\ref{E:qthetan}) to a tangent vector $\tilde{u}_n = (Q_n,A_n)$ to $\cN_n$, where
\begin{align}
\label{E:u_n}
Q_n = q_\theta^n \nu_n \text{ and } A_n = \sum_{i=1}^d a_i (\partial q_\theta^n/\partial \theta_i) \nu_n.
\end{align}
Let $T\cN_n = \{ \tilde{u}_n \mid u \in T\Theta\}$ be the set of all such tangent vectors.
Then our second assumption is that
\begin{align}
\label{E:gn_suff}
g_n(\tilde{u}_n,\tilde{v}_n) = g^n(\tilde{u}^n,\tilde{v}^n)
\end{align}
for all tangent vectors $u, v \in T \Theta$ with the same base-point.  Here, $\tilde{v}^n$ and $\tilde{v}_n$ are the tangent vectors to $\cM^n$ and $\cN_n$ (respectively) corresponding to $v \in T\Theta$, as for $u$ above.
Note that (\ref{E:gn_suff}) just says that $g_n = g^n$ under the identification of $\cM^n$ with $\cN_n$ via ${T_n}_*$.

\subsection{A3: Calculability of norms by a function that satisfies a weak continuity condition}
\label{S:Assum:H}

Let $h$ be the norm corresponding to $g$, so $h(\tilde{u}) = \sqrt{g(\tilde{u},\tilde{u})}$ for any $\tilde{u} \in T \cM$.  Note that $h$ determines $g$ by the polarisation formula,
$$ g(\tilde{u},\tilde{v}) = \left[h^2(\tilde{u} + \tilde{v}) - h^2(\tilde{u} - \tilde{v})\right]/4 $$
for any $\tilde{u},\tilde{v} \in T\cM$ with the same base-point (which follows from the bilinearity of $g$), so any question about $g$ can be phrased in terms of $h$.  However, it will be more convenient to work with $h$ than $g$, because $h$ is a function defined on $T \cM$, whereas $g$ is only defined on certain pairs of tangent vectors (those with the same base-point).  Similarly, let $h_n$ be the norm corresponding to $g_n$, so $h_n(\tilde{u}_n) = \sqrt{g_n(\tilde{u}_n,\tilde{u}_n)}$ for any $\tilde{u}_n \in T \cN_n$.

Let $\cT^\prime$ be the set of all pairs $(P,A)$, where $P$ is a probability measure on $\R^d$ and $A$ is a signed measure on $\R^d$, and note that $T\cN_n \subseteq \cT^\prime$ for every $n$.
Then our regularity condition (A3) is, firstly, that there is subset $\cT$ of $\cT^\prime$ and a function $H: \cT \to \R$ so that, for each $n$, $T\cN_n \subseteq \cT$ (i.e. $H$ is defined on each $T\cN_n$) and
\begin{align}
\label{E:calcH}
h_n(\tilde{u}_n) = H(\tilde{u}_n)
\end{align}
for every $\tilde{u}_n \in T \cN_n$.  In other words, we assume that there is some function $H$ whose restriction to each $T \cN_n$ is the norm $h_n$.  For instance, we could take $\cT = \cup_{n = 1}^\infty T \cN_n$ and then define $H$ by the requirement that (\ref{E:calcH}) holds, which gives a well-defined $H$ whenever the functions $h_n$ agree on any overlaps between the spaces $T \cN_n$.

Further, we assume that $H$ has the following weak continuity property.  Firstly, we require that $H$ is defined on all pairs of the form $(\Phi, f \Phi)$, where $\Phi$ is the probability measure for the standard normal distribution on $\R^d$ and $f:\R^d \to \R$ is a linear function (with $f(0)=0$).  Secondly, we require that
\begin{align}
\label{E:contH}
H(P_n, f P_n) = H(\Phi, f \Phi)
\end{align}
for any sequence $P_n$ of probability measures on $\R^d$ for which $H(P_n, f P_n)$ is constant in $n$, $P_n \Rightarrow \Phi$ and each $P_n$ is standardized (i.e., $P_n$ has $0$ mean and identity variance-convariance matrix),
where $H(P_n, f P_n)$ is the value of the function $H$ at $(P_n, f P_n) \in \cT$ and
$P_n \Rightarrow \Phi$ means $P_n$ converges to $\Phi$ in the sense of the weak convergence of measures \cite[Def. 1.2.1]{MeerschaertScheffler01}.  This condition is an extremely weak form of continuity, see Remark \ref{R:regcond}.

Lastly, as a consequence of our assumption (A2) that the metrics should be invariant under all canonical sufficient statistics, we assume that $H$ is affine invariant (see Remark \ref{R:affineinvar}).  Here, an invertible affine transformation of $\R^d$ is a map $L:\R^d \to \R^d$ of the form $L(x) = Mx + c$ for some invertible $d\times d$ matrix $M$ and some $c \in \R^d$.  The push-forward $L_*A$ of any signed measure $A$ on $\R^d$ is defined in a similar way to the push-forward of an (unsigned) measure, see (\ref{E:pushfwd}).
We define the push-forward $L_{**}(P,A)$ of any $(P,A) \in \cT$ to be $L_{**}(P,A)=(L_*P,L_*A)$.  (In this notation, $L_*$ is the measure-theoretic push-forward, which is a map from the space of signed measures on $\R^d$ to itself, and $L_{**}$ is the differential of this map if $(P,A)$ is interpreted as a tangent vector.)
Then our condition that $H$ is affine invariant means that $L_{**}(P,A) \in \cT$ and
\begin{align}
\label{E:affH}
H(L_{**}(P,A) = H(P,A)
\end{align}
for every $(P,A)\in \cT$ and every invertible affine transformation $L$ of $\R^d$.

For future reference, we note that if $L$ is an invertible affine transformation, $P$ is a probability measure and $f$ is a $P$-integrable, real-valued function then
\begin{align}
L_*(fP) = (f \circ L^{-1}) L_*P
\label{E:ChOfVar}
\end{align}
by the change of variables formula \cite[Thm. 3.6.1]{Bogachev07}.

\subsection{Remarks on the assumptions}
\label{S:Assum:Remarks}

\begin{remark}
\label{R:regcond}
Assumptions (A1) and (A2) say that the metrics on $\cM$, $\cM^n$ and $\cN_n$ are invariant under a countable set of transformations and, in a certain sense, under the finite-dimensional group of affine transformations of $\R^d$.
The third assumption (A3) is an extremely weak form of continuity.  Firstly, this condition says that the norms $h_n$ agree on any overlaps between the spaces $T\cN_n$, so that these functions can be pieced together into a single function $H$.  Secondly, this condition says that if $f$ is linear and $P_n \Rightarrow \Phi$ is a sequence for which $(P_n,fP_n)$ all have the same norms then this shared norm must be $H(\Phi,f\Phi)$.  By comparison, full continuity of $H$
would require that $\lim_{n\to \infty} H(P_n,f_n P_n) = H(P,f P)$ for every sequence $(P_n,f_n P_n)$ in $\cT$ that converges to $(P,f P)$ (with respect to some notion of convergence).  So our third assumption is the condition for the continuity of $H$ in the very special case where $P=\Phi$, $H(P_n,f_n P_n)$ is constant in $n$, $f_n = f$ for every $n$ and $f$ is a linear function.
\end{remark}

\begin{remark}
\label{R:comparison}
Recent versions of Chentsov's theorem \citep{AyEtAl15,BauerEtAl16} consider metrics on infinite-dimensional statistical models that are invariant under infinite-dimensional sets of transformations.  This infinite dimensionality introduces technical complications and it makes strong assumptions about both the space on which the metric is defined and its symmetries.  By contrast, our approach allows us to only consider metrics on a collection of finite-dimensional models, as in the original version of Chentsov's theorem \citep{Chentsov78,Chentsov82,Campbell86}.  This allows our characterisation of the Fisher information metric to be relatively free from technicalities and it allows us to make relatively weak invariance and regularity assumptions.
\end{remark}

\begin{remark}
\label{R:FI}
It is not hard to see that the Fisher information metric satisfies assumptions (A1)--(A3).  For it is well known that the Fisher information metric is invariant under both IID extensions (in the sense of (\ref{E:gn_IID})) and sufficient statistics \citep[eq. 4.2 and Thm. 2.1]{AmariNagaoka00}.  Also, given any probability measure $P$ on $\R^d$, let $\cT_P = \{ (P,fP) \mid f \in L^2(\R^d, P) \}$, and let $\cT$ be the union of these spaces $\cT_P$ as $P$ ranges over the set of all probability measures on $\R^d$.  Then by (\ref{E:FI_defn}), the Fisher information norm $H^F(P,fP)$ of any $(P,fP) \in \cT$ is just the $L^2(\R^d, P)$-norm of $f$.  So if $f$ is a linear function on $\R^d$, say $f(y) = c \cdot y$ for some $c \in \R^d$, and $Q$ is any standardized probability measure on $\R^d$ then
$$ H^F(Q,fQ) = \sqrt{\int (c \cdot y)^2 dQ(y)} = \sqrt{c^T \left(\int y y^T dQ(y)\right) c} =\sqrt{c^T I c} = \| c\|, $$
where $\| c\|$ is the Euclidean norm of $c \in \R^d$.
So for any sequence $P_n$ of standardized probability measures (whether weakly convergent to $\Phi$ or not), $H^F(P_n,fP_n) = \| c\| = H^F(\Phi,f\Phi)$, so $H^F$ satisfies the weak continuity condition (\ref{E:contH}).  Lastly, this function $H^F$ is affine invariant (\ref{E:affH}) by the change of variables formula (\ref{E:ChOfVar}).
\end{remark}

\begin{remark}
\label{R:factorn}
In some ways the factor of $n$ in (\ref{E:gn_IID}) is not essential, since we could instead formulate our assumptions and theorems in terms of the metrics $\dot{g}^n = g^n/n$ and $\dot{g}_n = g_n/n$, in which case (\ref{E:gn_IID}) would be equivalent to the equation that describes exact invariance under the map $IID_n$, rather than invariance up to a factor of $n$ (though $H$ as in (\ref{E:calcH}) might not exist without the factor of $n$).  However, it is natural to include the factor of $n$ in our formulation of IID invariance, firstly because the Fisher information metric is IID invariant in the sense of (\ref{E:gn_IID}) \citep[eq. 4.2]{AmariNagaoka00},
so assumptions (A1)--(A3) would not characterise the Fisher information metric without this factor,
and secondly because the factor of $n$
arises from a natural construction from differential geometry (see Remark \ref{R:gn}).
\end{remark}

\begin{remark}
\label{R:gn}
Given an arbitrary Riemannian metric $g$ on $\cM$, a natural construction from differential geometry gives a metric on the $n$-fold IID extension $\cM^n$ of $\cM$ equal to the metric $g^n$ satisfying (\ref{E:gn_IID}), as follows.  The Cartesian product $\cartMn$ of $\cM$ with itself $n$ times is the space whose points are $n$-tuples $(P_1, \dots, P_n)$ of measures $P_1, \dots, P_n \in \cM$ on $\cX$.  Given such an $n$-tuple, there is a corresponding product measure $P_1 \times \dots \times P_n$ on $\cX^n$, and conversely we can recover each $P_i$ from $P_1 \times \dots \times P_n$ by marginalizing, so we can identify $(P_1, \dots, P_n)$ with the product measure $P_1 \times \dots \times P_n$ on $\cX^n$.  This product measure is the joint distribution of independent random variables $X_1, \dots, X_n$ whose marginal distributions are $P_1, \dots, P_n$, respectively.
So if $(P_1, \dots, P_n)\in \cartMn$ satisfies $P_1 = \dots = P_n$ then $P_1 \times \dots \times P_n$ is the joint distribution of IID random variables $X_1, \dots, X_n$.
Therefore we can identify the diagonal
$$ \Delta = \left\{ \left. (P_1, \dots, P_n)\in \cartMn \right| P_1 = \dots = P_n \right\} $$
of $\cartMn$ with the $n$-fold IID extension $\cM^n$ of $\cM$.
But a Riemannian metric on $\cM$ induces a Riemannian metric on the Cartesian product $\cartMn$, and then $\Delta$ inherits a metric from its super-manifold $\cartMn$.  Under the above identification between $\Delta$ and $\cM^n$, this metric is the metric $g^n$ on $\cM^n$ that satisfies (\ref{E:gn_IID}).
\end{remark}

\begin{remark}
\label{R:affineinvar}
The canonical sufficient statistics for an exponential family are only unique up to affine transformations
\citep[Lemma 8.1]{BarndorffNielsen78}, meaning that if $L$ is an invertible affine transformation of $\R^d$ and $T_n:\cX^n \to \R^d$ is a canonical sufficient statistic then $L \circ T_n$ is also a canonical sufficient statistic (and every canonical sufficient statistic is of this form).
Replacing $T_n$ by $L \circ T_n$ effectively replaces each tangent vector $\tilde{u}_n \in T \cN_n$ by $L_{**} \tilde{u}_n$, so (\ref{E:gn_suff}), (\ref{E:calcH}) and the analogous equations for $L \circ T_n$ imply $H(L_{**} \tilde{u}_n) = H(\tilde{u}_n)$ for every $\tilde{u}_n \in T \cN_n$.
So since $L$ is arbitrary, $H$ is affine invariant.
\end{remark}

\section{The main theorem}
\label{S:chentsov}

We can now prove our version of Chentsov's theorem.  This theorem characterises the Fisher information metric as the only metric (up to rescaling) on an exponential family that is invariant under IID extensions and canonical sufficient statistics.

Let $g^F$, $g^{nF}$ and $g_n^F$ be the Fisher information metrics on $\cM$, $\cM^n$ and $\cN_n$, respectively.

\begin{theorem}
\label{T:main}
Suppose that assumptions (A1)--(A3) of Section \ref{S:assumptions} hold.
Then there is some $c>0$ so that $g = c g^F$, $g^n = c g^{nF}$ and $g_n = c g_n^F$ for every integer $n \ge 1$.
\end{theorem}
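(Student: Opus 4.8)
The plan is to reduce the whole statement to a single computation at the standard normal distribution, with the central limit theorem providing the bridge from the natural exponential families $\cN_n$ to $\Phi$. First I would chain assumptions (A1) and (A2): for every $u\in T\Theta$ and every $n$ they give $g_n(\tilde u_n,\tilde u_n)=g^n(\tilde u^n,\tilde u^n)=n\,g(\tilde u,\tilde u)$, so if $h$ denotes the norm of $g$ then, using (\ref{E:calcH}), $H(\tilde u_n)=\sqrt{n}\,h(\tilde u)$. Next I would make the tangent vectors $\tilde u_n\in T\cN_n$ explicit: differentiating (\ref{E:qthetan}) in $\theta$ as in (\ref{E:u_n}) shows that for $u=(\theta,a)$ one has $\tilde u_n=(Q_n,A_n)$ with $Q_n={T_n}_*P^n$ and $dA_n/dQ_n=n\,a\cdot(y-\eta(\theta))$, where $\eta(\theta)=\grad\log Z(\theta)$. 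Standard exponential-family identities give that $Q_n$ has mean $\eta(\theta)$ and variance-covariance matrix $I(\theta)/n$, where $I(\theta)=\grad^2\log Z(\theta)$ is positive definite by the full-rank hypothesis on $T$, and that the Fisher norm of $\tilde u$ is $h^F(\tilde u)=\|I(\theta)^{1/2}a\|$.

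The key step is a scaling trick. Fix $\theta$ and $a$, and for each $n$ apply the description above to the rescaled tangent vector $u^{(n)}=(\theta,a/\sqrt{n})\in T\Theta$, so that $dA^{(n)}_n/dQ_n=\sqrt{n}\,a\cdot(y-\eta(\theta))$. Let $L_n(y)=\sqrt{n}\,I(\theta)^{-1/2}(y-\eta(\theta))$, which is an invertible affine transformation of $\R^d$, and set $P_n=(L_n)_*Q_n$. Then $P_n$ is standardized, and by the multivariate central limit theorem $P_n\Rightarrow\Phi$. The change-of-variables formula (\ref{E:ChOfVar}) applied to $L_n$ shows that $(L_n)_{**}\tilde u^{(n)}_n=(P_n,fP_n)$ with $f(z)=(I(\theta)^{1/2}a)\cdot z$, a linear function that, crucially, does \emph{not} depend on $n$. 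Affine invariance (\ref{E:affH}) then gives $H(P_n,fP_n)=H(\tilde u^{(n)}_n)=\sqrt{n}\,h(\tfrac1{\sqrt n}\tilde u)=h(\tilde u)$, which is constant in $n$, so the weak continuity condition (\ref{E:contH}) yields $h(\tilde u)=H(\Phi,f\Phi)$.

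It remains to identify $H(\Phi,f\Phi)$ for linear $f(z)=c\cdot z$. Since $\Phi$ is invariant under orthogonal maps, (\ref{E:affH}) applied to a rotation sending $c$ to any $c'$ with $\|c'\|=\|c\|$ shows that $H(\Phi,f\Phi)$ depends only on $\|c\|$; write $H(\Phi,f\Phi)=\phi(\|c\|)$ for a function $\phi$ that is the same for all base points $\theta$. Combining with the previous paragraph, $h(\tilde u)=\phi(h^F(\tilde u))$ for every $\tilde u\in T\cM$. Because $h$ and $h^F$ are both positively homogeneous and $h^F$ attains every nonnegative value (as $a$ ranges over $\R^d$ at a fixed base point), this forces $\phi(r)=cr$ for a constant $c$, and $c>0$ since $h$ comes from a positive definite metric. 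Hence $h=c\,h^F$ on $T\cM$, so $g=c\,g^F$ by polarisation; finally $g^n=ng=c\,g^{nF}$ and $g_n=g^n=c\,g_n^F$ follow from (A1), (A2) and the fact, recorded in Remark \ref{R:FI}, that $g^F$ itself is invariant under IID extensions and canonical sufficient statistics.

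I expect the main obstacle to be precisely the scaling choice $a\mapsto a/\sqrt{n}$ in the second paragraph: it is what makes the standardized limiting functional $f$ independent of $n$, so that the weak-continuity hypothesis (\ref{E:contH}) — which insists on a fixed linear $f$ — can be invoked; without it the functional would drift with $n$. A secondary technical point is verifying the hypotheses of the central limit theorem, i.e.\ that $T$ has finite second moments under each $p_\theta\mu$, which holds because $\theta$ lies in the interior of $\Theta$ and the partition function is analytic there; and one must also be careful to track the push-forward of the signed measures $A_n$ correctly through $L_n$ via (\ref{E:ChOfVar}).
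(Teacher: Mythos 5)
Your proposal is correct and follows essentially the same route as the paper's proof: chaining (A1)--(A2) to express $h$ via $H$ on $T\cN_n$, standardising with an affine map so the central limit theorem and the weak continuity condition give $h(\tilde{u}) = H(\Phi, f\Phi)$ with $f$ linear, then using orthogonal invariance and homogeneity to conclude $h = c\, h^F$. Your rescaling $a \mapsto a/\sqrt{n}$ is just the paper's use of the homogeneity of $h_n$ applied to $n^{-1/2}\tilde{u}_n$, so the two arguments coincide in substance.
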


\begin{proof}
Let any integer $n \ge 1$ and any $\theta \in \Theta$ be given, and let $Q_1 = q_\theta \nu_1 \in \cN_1$ and $Q_n = q_\theta^n \nu_n \in \cN_n$ be the corresponding distributions in $\cN_1$ and $\cN_n$.  By Theorem 2.2.6 of \citep{KassVos97} and the comments preceding it, if $Y_1, \dots, Y_n$ are independent random variables
all distributed according to $Q_1$ then their mean is distributed as $Q_n$, which we write as
\begin{align}
\label{E:Pn_is_av}
(Y_1 + \dots + Y_n)/n \sim Q_n.
\end{align}
Alternatively, it is not hard to prove (\ref{E:Pn_is_av}), since if $X_1, \dots , X_n \sim p_\theta \mu$ are IID and $Y_i^\prime = T(X_i)$ then $Y_1^\prime, \dots, Y_n^\prime \sim Q_1$ are IID and $(Y_1^\prime + \dots + Y_n^\prime)/n = T_n(X_1, \dots , X_n) \sim Q_n$, by (\ref{E:pushfwd_rv}) and
since $Q_1 = T_*P$ and $Q_n = T_{n*}P^n$ (by definition), where $P = p_\theta \mu$.
This proves (\ref{E:Pn_is_av}) because
$Y_1, \dots, Y_n$ and $Y_1^\prime, \dots, Y_n^\prime$ have the same joint distribution so their means have the same distribution, by another application of (\ref{E:pushfwd_rv}).

By (\ref{E:Pn_is_av}),
the mean $\meansuff$ for $Q_1$ is the same as that for $Q_n$, i.e.
\begin{align}
\label{E:mean_suff}
\meansuff = \int y dQ_1(y) = \int y dQ_n(y),
\end{align}
and the variance-covariance matrix $\Sigma_\theta$ for $Q_1$ is $n$ times that for $Q_n$, i.e.
\begin{align}
\label{E:var_suff}
\Sigma_\theta = \int (y - \meansuff)(y - \meansuff)^T dQ_1(y) = n \int (y - \meansuff)(y - \meansuff)^T dQ_n(y).
\end{align}

Now, let $u = (\theta,a) \in T_\theta \Theta$ be any tangent vector to $\Theta$ at $\theta$, and define $f: \R^d \to \R$ by $f(y) = (\Sigma_\theta^{1/2} a)\cdot y$ for any $y \in \R^d$.  Here, $\Sigma_\theta^{1/2}$ is defined in the standard way via a diagonalisation of the symmetric, positive-definite matrix $\Sigma_\theta$.  As before, let $\tilde{u}$ and $\tilde{u}_n$, respectively, be the tangents to $\cM$ and $\cN_n$ that correspond to $u$ under the parameterisations (\ref{E:pthetan}) and (\ref{E:qthetan}).

{\em Claim 1: $h(\tilde{u}) = H(\Phi, f \Phi)$.}
By (\ref{E:qthetan}), (\ref{E:u_n}) and the fact that $\meansuff$ is the gradient of
\ifZ $\log Z$
\else $\psi$
\fi
at $\theta$ \cite[Thm. 2.2.1]{KassVos97}, $\tilde{u}_n = (Q_n,A_n)$ with $Q_n = q_\theta^n \nu_n$ and
\ifiota
\begin{align}
\label{E:An}
A_n = \sum_{i=1}^d a_i \frac{\partial q_\theta^n}{\partial \theta_i} \nu_n
\ifZ  = \sum_{i=1}^d a_i n \left(\iota_i - \frac{\partial \log Z}{\partial \theta_i}\right) q_\theta^n \nu_n
\else = \sum_{i=1}^d a_i n \left(\iota_i - \frac{\partial \psi}{\partial \theta_i}\right) q_\theta^n \nu_n
\fi
= na \cdot (\iota - \meansuff) Q_n,
\end{align}
where $\iota_i(y) = y_i$ and $\iota(y) = y$ for any $y \in \R^d$.
\else
$A_n$ is given by
\begin{align}
\label{E:An}
dA_n(y) = \sum_{i=1}^d a_i \frac{\partial q_\theta^n}{\partial \theta_i} d\nu_n(y)
\ifZ  = \sum_{i=1}^d a_i n \left(y_i - \frac{\partial \log Z}{\partial \theta_i}\right) q_\theta^n d\nu_n(y)
\else = \sum_{i=1}^d a_i n \left(y_i - \frac{\partial \psi}{\partial \theta_i}\right) q_\theta^n d\nu_n(y)
\fi
= na \cdot (y - \meansuff) dQ_n(y),
\end{align}
for any $y \in \R^d$.
\fi

Let $L$ be the affine transformation on $\R^d$ given by $L(y) = \sqrt{n} \Sigma_\theta^{-1/2} (y - \meansuff)$, and note that $\Sigma_\theta^{-1/2}$ exists because $\Sigma_\theta$ is positive-definite.
By (\ref{E:mean_suff}) and (\ref{E:var_suff}), this choice of $L$ ensures that $L_*Q_n$ is standardised, i.e., that $L_*Q_n$ has mean $0$ and variance-covariance matrix equal to the $d \times d$ identity matrix.
Note that $L$ depends on $n$, so we could instead write this as $L_n$, but for notational simplicity we will drop the subscript.
\ifiota
Then by (\ref{E:ChOfVar}) and (\ref{E:An}),
\begin{align}
L_*A_n
= na \cdot (\iota \circ L^{-1} - \meansuff) L_*Q_n
= \sqrt{n} f L_*Q_n,
\label{E:AnL}
\end{align}
where $f$ is as in the statement of the claim.
\else
Then by (\ref{E:ChOfVar}) and (\ref{E:An}), $L_*A_n$ is given by
$$ d(L_*A_n)(y)
= na \cdot (L^{-1}(y) - \meansuff) d(L_*Q_n)(y)
= (\sqrt{n} \Sigma_\theta^{1/2} a)\cdot y d(L_*Q_n)(y) $$
for any $y \in \R^d$, so
\begin{align}
L_*A_n = \sqrt{n} f L_*Q_n,
\label{E:AnL}
\end{align}
where $f(y) = (\Sigma_\theta^{1/2} a)\cdot y$ for any $y \in \R^d$.
\fi

So recalling the notation $L_{**}\tilde{u}_n = L_{**}(Q_n,A_n) = (L_*Q_n, L_*A_n)$, we have
\begin{align}
h(\tilde{u})
&= n^{-1/2} h_n(\tilde{u}_n) \text{ by (\ref{E:gn_IID}) and (\ref{E:gn_suff})} \nonumber \\
&= h_n(n^{-1/2} \tilde{u}_n) \text{ by the bilinearity of $g_n$} \nonumber \\
&= H(n^{-1/2} \tilde{u}_n) \text{ by (\ref{E:calcH})}  \nonumber \\
&= H(n^{-1/2} L_{**}\tilde{u}_n) \text{ by (\ref{E:affH})}  \nonumber \\
&= H(L_*Q_n, f L_*Q_n) \text{ by (\ref{E:vecsp}) and (\ref{E:AnL}).}  \label{E:step1}
\end{align}

By (\ref{E:Pn_is_av}), the central limit theorem (e.g. see \cite[Cor. 8.1.10]{MeerschaertScheffler01}) and the fact that $L_*Q_n$ is standardised, $L_*Q_n \Rightarrow \Phi$.  Therefore,
\begin{align}
h(\tilde{u})
&= H(L_*Q_n, f L_*Q_n) \text{ for all $n$, by (\ref{E:step1})}  \nonumber \\
&= H(\Phi, f \Phi) \text{ by (\ref{E:contH}),}  \label{E:useofcontcond}
\end{align}
so the claim is proved.

Now, let $v = (\phi,b) \in T\Theta$ be any tangent vector to $\Theta$, not necessarily with the same base-point as $u$, and let $\tilde{v} \in T\cM$ be the corresponding tangent vector to $\cM$.

{\em Claim 2: $a^T \Sigma_\theta a = b^T \Sigma_\phi b$ implies $h(\tilde{u})=h(\tilde{v})$.}
To prove this, assume that $a^T \Sigma_\theta a = b^T \Sigma_\phi b$, i.e. that $\Sigma_\theta^{1/2} a$ and $\Sigma_\phi^{1/2} b$ have the same Euclidean norm.  Then there exists a $d \times d$ orthogonal matrix $M$ so that
\begin{align}
\label{E:M}
M \Sigma_\theta^{1/2} a = \Sigma_\phi^{1/2} b.
\end{align}
Also, $M_*\Phi = \Phi$ because $M$ is orthogonal, so
\begin{align}
M_{**}(\Phi,f\Phi)=
(M_*\Phi,M_*(f\Phi))
= (M_*\Phi,(f\circ M^{-1}) M_*\Phi)
= (\Phi,e \Phi)
\label{E:Mstar}
\end{align}
by (\ref{E:ChOfVar}), where $e:\R^d \to \R$ is given by
\begin{align}
e(y) = f(M^{-1}(y)) = (\Sigma_\theta^{1/2} a)\cdot M^{-1}y
= (\Sigma_\theta^{1/2} a)^T M^{-1} y
= (\Sigma_\phi^{1/2} b) \cdot y
\label{E:e_defn}
\end{align}
for any $y \in \R^d$, by (\ref{E:M}) and $M^{-1}=M^{T}$ (since $M$ is orthogonal).  So
\begin{align}
h(\tilde{v})
&= H(\Phi, e \Phi) \text{ by Claim 1 applied to $v$ and by (\ref{E:e_defn})}  \nonumber \\
&= H(M_{**}(\Phi,f\Phi) \text{ by (\ref{E:Mstar})}  \nonumber \\
&= H(\Phi,f\Phi) \text{ by (\ref{E:affH})}  \nonumber \\
&= h(\tilde{u}) \text{ by Claim 1,} \nonumber 
\end{align}
which proves Claim 2.

{\em Claim 3: There is some $c > 0$ so that $h(\tilde{v}) = c \; h^F(\tilde{v})$ for all tangent vectors $\tilde{v} \in T \cM$. }
It is well-known \cite[Thms. 2.2.1 and 2.2.5]{KassVos97} that the Fisher information metric on the natural parameter space is the variance-covariance matrix of the corresponding sufficient statistic, so
$g^F(\tilde{u},\tilde{u}) = a^T \Sigma_\theta a$.  Alternatively, this follows easily from setting $n=1$ in (\ref{E:An}) and combining this with (\ref{E:FI_defn}) and the invariance of $g^F$ under sufficient statistics \citep[Thm. 2.1]{AmariNagaoka00}, since these give
\begin{align}
\label{E:FI_nat}
g^F(\tilde{u},\tilde{u}) = g^F_1(\tilde{u}_1,\tilde{u}_1)
= a^T \left(\int (y - \meansuff)(y - \meansuff)^T dQ_1(y) \right) a
= a^T \Sigma_\theta a,
\end{align}
where $\tilde{u}_1 \in T\cN_1$ is the tangent vector to $\cN_1$ corresponding to $u \in T\Theta$.
So Claim 2 is equivalent to
\begin{align}
\label{E:claim2rephrased}
h^F(\tilde{u}) = h^F(\tilde{v}) \text{ implies } h(\tilde{u}) = h(\tilde{v}),
\end{align}
for all tangent vectors $\tilde{u},\tilde{v} \in T \cM$, even if they have different base-points.

Now, fix $\tilde{u}$ to be some non-zero vector with $h^F(\tilde{u}) = 1$, and let $c = h(\tilde{u})$.  Note that $c>0$ because $g$ is an inner product on each tangent space so the norm of any non-zero tangent vector
is strictly positive.  Then for any non-zero $\tilde{v}$, $h^F(\tilde{v}/h^F(\tilde{v})) = h^F(\tilde{v})/h^F(\tilde{v}) =1$ by the bilinearity of $g^F$.  So $h^F(\tilde{u}) = h^F(\tilde{v}/h^F(\tilde{v}))$ and hence, by (\ref{E:claim2rephrased}), $h(\tilde{u}) = h(\tilde{v}/h^F(\tilde{v}))$.  Therefore $c = h(\tilde{u}) = h(\tilde{v}/h^F(\tilde{v}))= h(\tilde{v})/h^F(\tilde{v})$ by the bilinearity of $g$, so rearranging this equation proves the claim for all non-zero tangent vectors $\tilde{v} \in T \cM$.  But the claim holds trivially for any zero tangent vector $\tilde{v}$, since
$0 = h(\tilde{v}) = h^F(\tilde{v})$ by the bilinearity of $g$ and $g^F$, so the claim is proved.

The theorem now follows from Claim 3 and by (\ref{E:gn_IID}), (\ref{E:gn_suff}) and the analogous equations for the Fisher information metrics $g^F$, $g^{nF}$ and $g_n^F$, which hold by \citep[eq. 4.2 and Thm. 2.1]{AmariNagaoka00}.
\end{proof}

\section{Extensions to higher-order symmetric tensors}
\label{S:higherorder}

The proof of Theorem \ref{T:main} extends with almost no changes to characterise symmetric, order-$k$ tensors $\gk$ and $\gkn$ on $\cM$ and $\cN_n$, respectively, that satisfy conditions closely analogous to assumptions (A1)--(A3) of Section \ref{S:assumptions}.  Given such tensors $\gkn$, define $\hkn(\tilde{u}_n) = \sqrt[k]{\gkn(\tilde{u}_n, \dots, \tilde{u}_n)}$, where there are $k$ copies of $\tilde{u}_n$ in the right-hand side of this equation.  Assume that
\begin{align}
\label{E:higher_powerofn}
\gkn(\tilde{u}_n, \dots, \tilde{u}_n) = n^{k/2} \gkone(\tilde{u}_1, \dots, \tilde{u}_1),
\end{align}
which is a generalisation of (\ref{E:gn_IID}) from $k=2$ to general $k$.
Then as in the proof of Theorem \ref{T:main}, $\hkn(\tilde{u}_n) = \sqrt{n} \hkone(\tilde{u}_1)$ and $\hkn(\alpha \tilde{u}_n) = \alpha \hkn(\tilde{u}_n)$ for any $\alpha \ge 0$ (by (\ref{E:higher_powerofn}) and the multi-linearity of $\gkn$).  So with $\hk$ in place of $h$, the proof of Theorem \ref{T:main} implies that $\hk(\tilde{u}) = c \; h^F(\tilde{u})$ for some $c \in \R$, where $h^F$ is the norm of the Fisher information metric.  Raising this equation to the power of $k$ gives
\begin{align}
\label{E:higher}
\gk(\tilde{u}, \dots, \tilde{u}) = c^{k} \left[g^F(\tilde{u}, \tilde{u}) \right]^{k/2}.
\end{align}
If $k$ is odd then the left-hand side is an odd function of $\tilde{u}$ (i.e. it changes sign when $\tilde{u}$ is replaced by $-\tilde{u}$) while the right-hand side is an even function, which is a contradiction unless both sides vanish, so $c=0$.  If $k$ is even, then since $\gk$ is determined by (\ref{E:higher}) (by the polarisation formula for symmetric tensors), $\gk$ must be a constant times the symmetric part of $(g^F)^{k/2}$.  For example, when $k=4$ then there is some $c^\prime \in \R$ so that
$$ \gk(\tilde{u}, \tilde{v}, \tilde{w}, \tilde{m}) = c^\prime \left[ g^F(\tilde{u},\tilde{v})g^F(\tilde{w},\tilde{m}) + g^F(\tilde{u},\tilde{w})g^F(\tilde{v},\tilde{m}) + g^F(\tilde{u},\tilde{m})g^F(\tilde{v},\tilde{w})  \right] $$
for any $\tilde{u}, \tilde{v}, \tilde{w}, \tilde{m} \in T\cM$.

\begin{remark}
It might also be possible to adapt the proof of Theorem \ref{T:main} to characterise the higher-order Amari-Chentsov tensors, which are symmetric, order-$k$ tensors that coincide with the Fisher information metric when $k=2$ and in general are given by an equation similar to (\ref{E:FI_defn}), e.g. see \cite[eq. 2.4]{AyEtAl15} for the $k=3$ case.
Claim 1 in the proof of Theorem \ref{T:main} does not seem to hold for these tensors in general.  However, if we replace the $k/2$ in (\ref{E:higher_powerofn}) by other powers and strengthen the weak continuity condition on $H$ then
it might be possible to replace Claim 1 by
$\hk(\tilde{u}) = H(K \Phi, f K \Phi)$,
where $K$ is an Edgeworth polynomial (see \citep{BarndorffNielsenCox79} or \citep[\S4.5]{KassVos97}).
Then a symmetry argument, similar to the one in the proof of Theorem \ref{T:main}, should give the desired characterisation.
\end{remark}

\section{Discussion}
\label{S:conclusion}

Our version of Chentsov's theorem characterises the Fisher information metric as the unique Riemannian metric (up to rescaling) on an exponential family $\cM$ which is invariant under IID extensions and canonical sufficient statistics.
We proved this by considering metrics $g$ on $\cM$, $g^n$ on the $n$-fold IID extension $\cM^n$ of $\cM$, and $g_n$ on the natural exponential family $\cN_n$ corresponding to $\cM^n$.  Then, under the above invariance conditions, $g$ can be calculated in terms of $g_n$, for any $n$.
But for large $n$, the central limit theorem and a property (\ref{E:Pn_is_av}) of exponential families imply that $\cN_n$ consists of distributions which are all approximately normally distributed, so each distribution in $\cN_n$ is determined to a good approximation by its mean and variance-covariance matrix.  Further, each tangent vector to $\cN_n$ is essentially a linear function $f$ times a distribution in $\cN_n$.  Combining these facts shows that (the norm corresponding to) $g$ is approximately equal to a simple function of 
$f$ and the mean and variance-covariance matrix of the relevant distribution in $\cN_n$.  Our regularity condition implies that this approximation becomes exact in the limit as $n \to \infty$. 
Then our main result follows from an identity (\ref{E:FI_nat}) relating the variance-covariance matrix
to the Fisher information metric on an exponential family.

In general, Chentsov's theorem characterizes the Fisher information metrics of statistical models as the only Riemannian metrics (up to rescaling) that are invariant under certain, statistically important transformations.  Previous studies have taken these transformations to be either all sufficient statistics or a large, regular subset of these.  By contrast, we take these statistically important transformations to be the IID extensions and canonical sufficient statistics.  This class of transformations is arguably more natural than the class of all sufficient statistics, it is more appropriate for exponential families and it is a relatively small class so our invariance assumptions are weaker than those of previous studies.  Our regularity assumptions also appear to be weaker than previous studies, ultimately due to the fact that our approach only requires us to study a collection of finite-dimensional models, rather than an infinite-dimensional model.

We have given a new characterisation of the Fisher information metric on an exponential family and we have shown that this result is an intuitive consequence of the central limit theorem.
The main limitation of this paper is that our main result is only proved for exponential families.
However, exponential families are an important class of statistical models, being well studied and widely used in applications.  Also, our proof treats discrete and continuous models in a uniform way, so there is some hope that our approach can be adapted to give a proof of Chentsov's theorem for general statistical models.  Lastly, our focus on exponential families complements the focus of \citet{BauerEtAl16} on diffeomorphism-invariant metrics, since (curved) exponential families are essentially the only statistical models which have smooth sufficient statistics that are not diffeomorphisms, by the Pitman--Koopman--Darmois theorem \citep{BarndorffNielsenPedersen68}.

\appendix

\section{The invariant and parameterisation-dependent definitions of the Fisher information metric coincide}
\label{S:E:FI_abs_conv}

This section proves (in the notation of Section \ref{S:FImetric}) that the invariant definition (\ref{E:FI_defn}) of the Fisher information metric reduces to the usual parameterisation-dependent definition given by (\ref{E:FI_defn_conv}), below.

Given any tangent vectors $u=(\theta,a)$ and $v=(\theta,b)$ in $T_\theta \Theta$, let $\tilde{u} = (P,A)$ and $\tilde{v} = (P,B)$ be the corresponding tangent vectors in $T_P\cM$, where $P=p_\theta \mu$.  Then by (\ref{E:tvec_param}),
$$ A = \sum_{i=1}^d a_i \frac{\partial p_\theta}{\partial \theta_i} \mu
= \sum_{i=1}^d \frac{a_i}{p_\theta} \frac{\partial p_\theta}{\partial \theta_i} p_\theta \mu
= \sum_{i=1}^d a_i \frac{\partial \log p_\theta}{\partial \theta_i} P$$
so $dA/dP = \sum_{i=1}^d a_i (\partial/\partial \theta_i) \log p_\theta$, and similarly for $dB/dP$.  Substituting these into (\ref{E:FI_defn}) gives
\begin{align}
\label{E:FI_abs_conv}
g^F(\tilde{u},\tilde{v}) = \sum_{i,j=1}^d a_i b_j \int \left(\frac{\partial \log p_\theta }{\partial \theta_i} \right) \left( \frac{\partial \log p_\theta }{\partial \theta_j} \right) p_\theta d\mu
= a^T \bar{g}^{F}_\theta b,
\end{align}
where $\bar{g}^{F}_\theta$ is the $d \times d$ matrix with $(i,j)^{th}$ entry
\begin{align}
\label{E:FI_defn_conv}
[\bar{g}^{F}_\theta]_{ij} = \int \left(\frac{\partial \log p_\theta }{\partial \theta_i} \right) \left(
\frac{\partial \log p_\theta }{\partial \theta_j} \right) p_\theta d\mu,
\end{align}
for any $i,j = 1, \dots, d$.  Therefore the invariant definition (\ref{E:FI_defn}) reduces to the usual, parameterisation-dependent definition (\ref{E:FI_defn_conv}) for the Fisher information metric \citep[eq. 2.6]{AmariNagaoka00}.

\begin{remark}
The metric $\bar{g}^{F}$ on $\Theta$ is just the pull-back of the metric $g^F$ on $\cM$ via the parameterisation map $\Theta \to \cM$.
\end{remark}

\bibliographystyle{abbrvnat}
\bibliography{R:/5050/CEB/Share/Staff/EnesAndDaniel/Bibliography/bibliography}

\end{document}